\newtheorem{proposition}{Proposition}[section]
\newtheorem{theorem}{Theorem}[section]
\newtheorem{definition}{Definition}[section]
\newtheorem{remark}{Remark}[section]
\newtheorem{corollary}{Corollary}[section]
\newtheorem{lemma}{Lemma}[section]
\numberwithin{equation}{section}
\theoremstyle{plain}
\begin{document}
\begin{center}
\textbf{On general characterization of Young measures associated with Borel functions}
\end{center}
\begin{center}
\textbf{Andrzej Z. Grzybowski}\\

{\small\mbox{Institute of
 Mathematics, Czestochowa University of Technology,
 al. Armii Krajowej 21}, 42-200 Cz\c{e}stochowa, Poland\\
Email: azgrzybowski@gmail.com}\\
\textbf{Piotr Pucha{\l}a}\\

{\small\mbox{Institute of
 Mathematics, Czestochowa University of Technology,
 al. Armii Krajowej 21}, 42-200 Cz\c{e}stochowa, Poland\\
Email: piotr.puchala@im.pcz.pl, p.st.puchala@gmail.com}
\end{center}

\title{On general characterization of Young measures associated with Borel functions}

\begin{abstract}
We prove that the Young measure associated with a Borel function $f$ is a probability distribution of the random variable $f(U)$, where $U$ has a uniform distribution on the domain of $f$. As an auxiliary result, the fact that Young measures associated with simple functions are weak$^{\ast}$ dense in the set of Young measures associated with measurable functions is proved. Finally some examples of specific applications of the main result are presented with comments. 

\vspace{0.5cm}

\noindent\textbf{Keywords}: Young measures, characterization of measures, Borel functions, uniform distribution\\
\noindent\textbf{AMS Subject Classification}: 46N10; 46N30; 49M30; 60A10
\bigskip
\end{abstract}

\section[]{Introduction} One of the major problems in the calculus of variations is minimization of functionals which are bounded from below but do not attain their infima. If the minimized functional $\mathcal{J}$ is bounded, the direct method can be applied: there always exists a minimizing sequence for $\mathcal{J}$, that is a sequence $(u_n)$, $u_n\colon\mathbb{R}^d\to
\mathbb{R}^l$, $n\in\mathbb{N}$, such that $\lim\limits_{n\to\infty}\mathcal{J}(u_n)=
\inf\mathcal{J}$. Additionally, if $\mathcal{J}$ is coercive, $(u_n)$ is always bounded. However, if $\mathcal{J}$ does not attain its infimum then the elements of $(u_n)$ are functions of highly oscillatory nature. Moreover, weak$^{\ast}$ convergence in $L^{\infty}$ of $(u_n)$ to some function $u_0$ does not guarantee, that the sequence $(\varphi(u_n))$ of compositions of $u_n$ with continuous function $\varphi$ is weak$^{\ast}$ convergent in $L^{\infty}$. Indeed, in general it is not convergent  not only to $\varphi(u_0)$, but to any function with domain in $\mathbb{R}^d$. 

Laurence Chisholm Young introduced in \cite{Young} objects called by him 'generali\-zed curves', nowadays called 'Young measures'.   
These are the 'generalized li\-mits' of sequences of highly oscillating functions. The 'mature' form of Young's theorem has been proved by J.M.Ball in \cite{Ball} (see also \cite{Pedregal}, theorem 6.2). According to these theorems, we say that under their assumptions the considered sequences 'generate' appropriate Young measures. This approach is studied for example in \cite{Pedregal} in detail.

Alternatively, we can look at the Young measure as at object associated with \textit{any} measurable function defined on a nonempty, open, bounded subset $\varOmega$ of
$\mathbb{R}^d$ with values in a compact subset $K$ of $\mathbb{R}^l$. Such a conclusion can be derived from the theorem 3.6.1 in \cite{Roubicek}. Thank to this theorem it can be proved that the Young measure associated with a simple function is the convex combination of Dirac measures. These Dirac measures are concentrated at the values of the simple function under consideration while coefficients of the convex combinaton are proportional to the Lebesgue measure of the sets on which the respective values are taken on by the function; see \cite{Puchala} for details and more general result concernig simple method of obtaining explicit form of Young measures associated with oscillating functions. This method does not need advanced functional analytic methods: it is based on the change of variable theorem.

In this article we significantly generalize the above results. We prove a theorem providing general yet simple description of  Young measures associated with Borel functions. As a consequence, the theorem enables one to compute explicit formulae of probability density functions of the Young measures in many interesting cases. What is more, it also can be done without any sophisticated functional analytic apparatus. Since Young measures are widely used in many areas of theoretical and applied sciences (see for example \cite{Balder2}, \cite{Malek}, \cite{Muller}, \cite{Pedregal1}), our result provide a handy tool of obtaining their explicit form.

The main theorem of this article  states 
that the Young measure associated with any Borel function $f$ defined on the set $\varOmega\subset\mathbb{R}^d$ with posi\-tive Lebesgue measure $M$ and values in a compact set $K\subset\mathbb R^l$,  is in fact a probability distribution of a random variable $X=f(U)$, where $U$ is uniformly distributed on $\varOmega$. Before this, we prove a lemma corresponding to standard measure-theoretic result, that any Borel function is a pointwise limit of the appropriate sequence of simple functions. This is neither new nor most general result of this type (in \cite{Balder1} the result is stated for completely regular Souslin spaces), but it seems that simplicity of the proof of its special case that may be of interest in applications makes it worth mentio\-ning. Relying on this fact we  prove, that for any Borel function $f\colon\varOmega\to K$, its Young measure is the weak$^{\ast}$ limit of a sequence of Young measures associated with the elements of the sequence of simple functions convergent pointwise to $f$. Finally, we illustrate the result with one-dimensional example.

\section[]{Young measures -- necessary information and an auxiliary result}

The first part of this section can serve as a very brief introduction to the theory of Young measures. In the second part we state and prove  lemma mentioned at the end of the Introduction.

\subsection[]{An outline of the Young measures theory}

We gather now some information about Young measures. An interested reader can find details, together with proofs and further bibliography, for example in \cite{Attouch}, \cite{Florescu}, \cite{Gasinski}, \cite{Pedregal}, \cite{Roubicek}. Theorems that plays crucial role in the sequel, namely 
theorem \ref{isomorphism}, theorem \ref{diagram}, corollary \ref{corollaryWarga}, are stated and proved in \cite{Roubicek}.

Let $\mathbb{R}^d\supset\varOmega$ be nonempty, bounded open set and let 
$K\subset\mathbb{R}^l$ be compact. Let $(f_n)$ be a sequence of functions from $\varOmega$ to $K$, convergent to some function $f_0$ weakly$^{\ast}$ in $L^{\infty}$. Finally, let $\varphi$ be an arbitrary continuous real valued function on $\mathbb{R}^l$. Then the sequence $(\varphi(f_n))$ is uniformly bounded in $L^{\infty}$ norm and therefore by the Banach -- Alaoglu theorem there exists a subsequence of $(\varphi(f_n))$   
weakly$^{\ast}$ convergent to some function $g$. In general $g\neq\varphi((f_0))$.
 L. C. Young proved in \cite{Young}, that there exists a subsequence of $(\varphi(f_n))$, not relabelled, and a family $(\nu_x)_{x\in\varOmega}$ of probability measures with supports $\textnormal{supp}\nu_x\subseteq K$, such that 
$\forall\,\varphi\in C(\mathbb{R}^l)\;\forall w\in L^1(\varOmega)$ there holds
\[
\lim\limits_{n\to\infty}\int\limits_{\varOmega}\varphi(f_n(x))w(x)dx=
\int\limits_{\varOmega}\int\limits_K\varphi(s)\nu_x(ds)w(x)dx:=
\int\limits_{\varOmega}\overline{\varphi}(x)w(x)dx.
\]
This family of probability measures is today called a \textit{Young measure associated with the sequence} $(f_n)$. 

In applications there often appears important and particularly simple form of Young measure, a \textit{homogeneous} Young measure. This is 'family' $(\nu_x)_{x\in\varOmega}$ that does not depend on the variable $x$.

In 1989 J. M. Ball proved the following theorem.  Let $\varOmega$ be a measurable subset of $\mathbb{R}^d$, $v\colon [0,+\infty)\to [0,+\infty)$ a continuous, nondecreasing function such that $\lim\limits_{t\to\infty}v(t)=+\infty$. By $\psi$ we denote a function 
$\psi\colon\varOmega\times\mathbb{R}^l\ni(x,\lambda)\to\psi(x,\lambda)\in
\overline{\mathbb{R}}$ satisfying Carath\'eodory conditions: it is measurable with respect to the first, and continuous with respect to the second variable. Consider further a sequence $(f_n)$ of functions on $\varOmega$ with values in $\mathbb{R}^l$, satisfying the condition
\[
\sup\limits_n\int\limits_{\varOmega}v(\vert f_n(x)\vert)dx<+\infty.
\]
\begin{theorem}
(\cite{Ball}) Under the above assumptions, there exists a subsequence of $(f_n)$, not relabelled, and a family $(\nu_x)_{x\in\varOmega}$ of probability measures, dependent measurably on $x$, such that if for any Carath\'eodory function $\psi$ the sequence 
$(\psi(x,f_n(x))$ is weakly convergent in $L^1(\varOmega)$, then its weak limit is a function
\[
\overline{\psi}(x)=\int\limits_{\mathbb{R}^l}\psi(x,\lambda)d\nu_x(\lambda).
\]
\end{theorem}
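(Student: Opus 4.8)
\emph{Proof proposal.} The plan is to follow the standard functional-analytic route: realise each $f_n$ as an ``elementary'' Young measure, extract a weak$^{\ast}$ convergent subsequence by a compactness argument, and then identify the limit. First I would introduce the space $Y:=L^{\infty}_{w^{\ast}}(\varOmega;\mathcal{M}(\mathbb{R}^l))$ of weak$^{\ast}$ measurable maps $x\mapsto\mu_x$ from $\varOmega$ into the space of finite Radon measures on $\mathbb{R}^l$ with $\mathrm{ess\,sup}_x\|\mu_x\|<\infty$; the key representation fact is that $Y$ is the dual of the separable Banach space $L^1(\varOmega;C_0(\mathbb{R}^l))$, where separability uses that $\varOmega$ has finite Lebesgue measure and that $C_0(\mathbb{R}^l)$ is separable. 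To each $f_n$ we associate $\delta_{f_n}\colon x\mapsto\delta_{f_n(x)}$, which belongs to $Y$ with $\|\delta_{f_n(x)}\|=1$ for a.e.\ $x$.

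Since $(\delta_{f_n})$ is bounded in $Y$, the sequential Banach--Alaoglu theorem (applicable because the predual is separable) yields a subsequence, not relabelled, and an element $\nu=(\nu_x)_{x\in\varOmega}\in Y$ such that $\delta_{f_n}$ converges weakly$^{\ast}$ to $\nu$; explicitly, for every $g\in L^1(\varOmega;C_0(\mathbb{R}^l))$,
\[
\int_{\varOmega}g(x,f_n(x))\,dx\;\longrightarrow\;\int_{\varOmega}\int_{\mathbb{R}^l}g(x,\lambda)\,d\nu_x(\lambda)\,dx.
\]
Testing with $g(x,\lambda)=w(x)\varphi(\lambda)$ for $w\ge0$ and $\varphi\ge0$ and using lower semicontinuity of the norm under weak$^{\ast}$ limits gives $\nu_x\ge0$ and $\|\nu_x\|\le1$ a.e. The point at which the hypothesis on $v$ is essential is the equality $\|\nu_x\|=1$ a.e., i.e.\ that no mass escapes to infinity: by Chebyshev's inequality $|\{|f_n|>R\}|\le v(R)^{-1}\sup_n\int_{\varOmega}v(|f_n|)\,dx$, so choosing cut-offs $\varphi_R\in C_c(\mathbb{R}^l)$ with $0\le\varphi_R\le1$ and $\varphi_R\equiv1$ on the ball $B_R$ and passing to the limit yields $\int_{\varOmega}\|\nu_x\|\,dx\ge\int_{\varOmega}\int\varphi_R\,d\nu_x\,dx=\lim_n\int_{\varOmega}\varphi_R(f_n)\,dx\ge|\varOmega|-v(R)^{-1}\sup_n\int_{\varOmega}v(|f_n|)\,dx$; letting $R\to\infty$ and combining with $\|\nu_x\|\le1$ forces $\|\nu_x\|=1$ a.e.

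It remains to identify the weak $L^1$-limit $\overline{\psi}$ of $(\psi(\cdot,f_n(\cdot)))$. I would first treat a bounded Carath\'eodory integrand $\psi$ supported in $\varOmega\times B_R$: by the Scorza--Dragoni theorem the map $x\mapsto\psi(x,\cdot)$ is a strongly measurable, essentially bounded element of $L^1(\varOmega;C_0(\mathbb{R}^l))$, so for every measurable $E\subseteq\varOmega$ the weak$^{\ast}$ convergence gives $\int_E\psi(x,f_n(x))\,dx\to\int_E\langle\nu_x,\psi(x,\cdot)\rangle\,dx$; since simultaneously $\int_E\psi(\cdot,f_n)\,dx\to\int_E\overline{\psi}\,dx$ by weak $L^1$-convergence, arbitrariness of $E$ yields $\overline{\psi}(x)=\int_{\mathbb{R}^l}\psi(x,\lambda)\,d\nu_x(\lambda)$ a.e. For a general Carath\'eodory $\psi$ with $(\psi(\cdot,f_n))$ weakly convergent in $L^1(\varOmega)$, the Dunford--Pettis theorem makes $\{\psi(\cdot,f_n)\}$ equi-integrable; truncating $\psi$ both in value and in the $\lambda$-variable, applying the previous step, and passing to the limit --- controlling the tail on the $f_n$ side by equi-integrability and on the $\nu_x$ side by monotone convergence together with $\|\nu_x\|=1$ --- gives the formula for $\overline{\psi}$ in general. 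The measurable dependence of $\nu_x$ on $x$ is built into membership in $Y$.

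I expect the two genuinely delicate points to be the probability-measure property (ruling out concentration of mass at infinity, which is precisely where the coercivity-type assumption on $v$ enters and cannot be dropped) and the truncation argument in the last step (matching the equi-integrable tails of $\psi(\cdot,f_n)$ with the tails of the limiting measures $\nu_x$); the identification of $Y$ with the dual of $L^1(\varOmega;C_0(\mathbb{R}^l))$ and the attendant measurability issues (Scorza--Dragoni, weak$^{\ast}$ measurability, essential separable range) are standard but must be invoked with care.
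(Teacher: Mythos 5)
The paper does not prove this theorem: it is quoted verbatim from Ball \cite{Ball} as background material, so there is no in-paper argument to compare yours against. Your outline is the standard proof --- essentially the one in Ball's original article and in the expositions of M\"uller \cite{Muller} and Pedregal \cite{Pedregal} --- and it is correct in its essentials: the identification of $L^{\infty}_{w^{\ast}}(\varOmega;\mathcal{M}(\mathbb{R}^l))$ with the dual of the separable space $L^1(\varOmega;C_0(\mathbb{R}^l))$, sequential Banach--Alaoglu applied to the elementary Young measures $\delta_{f_n(\cdot)}$, positivity and $\|\nu_x\|\le 1$ from testing with product integrands, the Chebyshev/cut-off argument forcing $\|\nu_x\|=1$ a.e.\ (correctly isolated as the one place where the coercivity of $v$ enters), and the Scorza--Dragoni plus truncation step to pass from $C_0$ test functions to general Carath\'eodory integrands using the equi-integrability supplied by the assumed weak $L^1$ convergence. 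Two points deserve explicit care if you write this out in full. First, your mass-conservation inequality subtracts $v(R)^{-1}\sup_n\int_{\varOmega}v(|f_n|)\,dx$ from $|\varOmega|$, which requires $|\varOmega|<\infty$; that holds in this paper's setting (bounded $\varOmega$), but the theorem as stated only assumes $\varOmega$ measurable, so for the general case one should run the argument on bounded exhausting subsets of $\varOmega$. Second, the statement permits $\psi$ to take values in $\overline{\mathbb{R}}$, so the reduction to bounded integrands needs a sentence explaining that the hypothesis $\psi(\cdot,f_n(\cdot))\in L^1(\varOmega)$ together with Dunford--Pettis equi-integrability controls the truncation error in the value variable, while the tightness estimate controls the truncation in $\lambda$; you gesture at both, and filling them in is routine.
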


We now turn our attention to the presentation of the Young measures as in \cite{Roubicek}. In general, Young measures can be looked at as the element of the space conjugate to the space $L^1(\varOmega,C(K))$ of Bochner integrable functions on $\varOmega\subset\mathbb{R}^d$ with values in $C(K)$. The space $L^1(\varOmega,C(K))$ is isometrically isomorphic to the space $Car(\varOmega,K;\mathbb{R})$ of the Carath\'eodory functions, equipped with the norm
\[
\|h\|_{Car}:=\int\limits_{\varOmega}\sup\limits_{k\in K}\vert h(x,k)\vert dx.
\]
Let $h\in L^1(\varOmega,C(K))$. Denote by $\mathcal{U}$ the set of all measurable functions on $\varOmega$ with values in $K$. Consider a mapping 
\[
i\colon \mathcal{U}\rightarrow L^1(\varOmega ,C(K))^\ast 
\]
defined by the formula
\[
\langle i(f),h\rangle :=\int\limits_{\varOmega}h(x,f(x))dx.
\]
By $Y(\varOmega ,K)$ we denote the weak$^\ast$ closure of
the set $i(\mathcal{U})$ in $L^1(\varOmega ,C(K))^\ast$:
\[
Y(\varOmega ,K):=\Bigl\{L^1(\varOmega ,C(K))^\ast\ni\eta :\exists 
(f_n)\subset \mathcal{U}:i(f_n)\xrightarrow[n\rightarrow\infty]{w^\ast}\eta\Bigr\}.
\]
Denote by
\begin{itemize}
\item
$rca(K)$ -- the space of regular, countably additive scalar measures on $K$, equipped with the norm $\|m\|_{rca(K)}:=\vert m\vert(\varOmega)$, where $\vert\cdot\vert$ stands in this case for the total variation of the measure $m$. With this norm $rca(K)$ is a Banach space;
\item
$rca^1(K)$ -- the subset of $rca(K)$ with elements being probability measures on $K$;
\item
$L_{w^\ast}^{\infty}(\varOmega ,\textnormal{rca}(K))$ -- the set of the weakly$^\ast$ measurable mappings
\[
\nu\colon\varOmega\ni x\to\nu(x)\in rca(K).
\]
We equip this set with the norm
\[
\Vert\nu\Vert_{L_{w^\ast}^{\infty}(\varOmega ,\textnormal{rca}(K))}:=
\textnormal{ess}\sup\bigl\{\Vert\nu (x)\Vert_{\textnormal{rca}(K)}:
x\in\varOmega\bigr\}.
\]
By the Dunford -- Pettis theorem this space is isometrically isomorphic with the space
$L^1(\varOmega ,C(K))^\ast$.
\end{itemize}
Now define an element $\eta$ of $L^1(\varOmega ,C(K))^\ast$ by the formula
\[
\eta\colon L^1(\varOmega ,C(K))\ni h\to\langle\eta ,h\rangle :=\int\limits_{\varOmega}
\Bigl(\int\limits_{K}
h(x,k)d\nu_x(k)\Bigr)dx,
\]
which in turn will be the value of the mapping
\[
\psi\colon L^{\infty}_{w^\ast}(\varOmega,\textnormal{rca}(K))\ni
\nu\to\psi (\nu ):=\eta\in L^1(\varOmega ,C(K))^\ast
\]
\begin{theorem}\label{isomorphism}
The mapping $\psi$ defined above is an isometric isomorphism bet\-ween the spaces
$L^{\infty}_{w^\ast}(\varOmega,\textnormal{rca}(K))$ and $L^1(\varOmega ,C(K))^\ast$.
\end{theorem}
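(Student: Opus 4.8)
The plan is to treat Theorem~\ref{isomorphism} as the vector-valued version of the Riesz representation theorem. Since $K\subset\mathbb{R}^{l}$ is compact, hence metrizable, the space $C(K)$ is separable and $C(K)^{\ast}=\mathrm{rca}(K)$ by the classical Riesz theorem; the assertion is then precisely that the dual of the Bochner space $L^{1}(\varOmega,C(K))$ is the space of essentially bounded, weakly$^{\ast}$ measurable $\mathrm{rca}(K)$-valued maps, with the duality implemented by $\psi$. I would split the argument into three parts: that $\psi$ is a well-defined, linear, norm-nonincreasing injection; that $\psi$ preserves norms; and that $\psi$ is onto. Throughout I write $(\chi_{A}\otimes g)(x,k):=\chi_{A}(x)g(k)$ for measurable $A\subset\varOmega$ and $g\in C(K)$, and use that finite sums of such elementary tensors are dense in $L^{1}(\varOmega,C(K))$.

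For well-definedness one fixes $\nu\in L^{\infty}_{w^{\ast}}(\varOmega,\mathrm{rca}(K))$ and $h\in L^{1}(\varOmega,C(K))$, viewed as a Carath\'eodory integrand. Writing $h$ as an $L^{1}$-limit of simple $C(K)$-valued functions---for which $x\mapsto\int_{K}h(x,k)\,d\nu_{x}(k)$ is a finite combination of the functions $x\mapsto\int_{K}g\,d\nu_{x}$, measurable by weak$^{\ast}$ measurability of $\nu$---and passing to an a.e.\ convergent subsequence shows that $x\mapsto\int_{K}h(x,k)\,d\nu_{x}(k)$ is measurable in general. The pointwise estimate $\bigl|\int_{K}h(x,k)\,d\nu_{x}(k)\bigr|\le\Vert\nu(x)\Vert_{\mathrm{rca}(K)}\sup_{k\in K}\vert h(x,k)\vert$, integrated over $\varOmega$, gives $\vert\langle\psi(\nu),h\rangle\vert\le\Vert\nu\Vert_{L^{\infty}_{w^{\ast}}}\Vert h\Vert_{Car}$, so $\psi(\nu)\in L^{1}(\varOmega,C(K))^{\ast}$ and $\Vert\psi(\nu)\Vert\le\Vert\nu\Vert$. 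Linearity is immediate, and $\psi(\nu)=0$ forces $\int_{A}\int_{K}g\,d\nu_{x}\,dx=0$ for all $A$ and $g$, whence $\nu_{x}=0$ for a.e.\ $x$, so $\psi$ is injective.

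For the isometry, given $\varepsilon>0$ one chooses a set $A_{\varepsilon}\subset\varOmega$ of positive measure on which $\Vert\nu(x)\Vert_{\mathrm{rca}(K)}>\Vert\nu\Vert_{L^{\infty}_{w^{\ast}}}-\varepsilon$. Fixing a countable, symmetric, dense subset $\{g_{j}\}$ of the unit ball of $C(K)$ one has $\Vert\nu(x)\Vert_{\mathrm{rca}(K)}=\sup_{j}\int_{K}g_{j}\,d\nu_{x}$ for a.e.\ $x$, so letting $j(x)$ be the least index with $\int_{K}g_{j(x)}\,d\nu_{x}>\Vert\nu(x)\Vert_{\mathrm{rca}(K)}-\varepsilon$ defines a Carath\'eodory function $(x,k)\mapsto g_{j(x)}(k)$ of sup-norm $\le1$. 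Testing $\psi(\nu)$ against $h(x,k):=\chi_{A_{\varepsilon}}(x)\,g_{j(x)}(k)/\vert A_{\varepsilon}\vert$, which has $\Vert h\Vert_{Car}\le1$, yields $\Vert\psi(\nu)\Vert\ge\Vert\nu\Vert_{L^{\infty}_{w^{\ast}}}-2\varepsilon$; sending $\varepsilon\to0$ and combining with the previous step gives $\Vert\psi(\nu)\Vert=\Vert\nu\Vert$.

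Finally, for surjectivity one takes $\eta\in L^{1}(\varOmega,C(K))^{\ast}$. For fixed $g\in C(K)$ the set function $A\mapsto\langle\eta,\chi_{A}\otimes g\rangle$ is a scalar measure on $\varOmega$ absolutely continuous with respect to Lebesgue measure, since $\vert\langle\eta,\chi_{A}\otimes g\rangle\vert\le\Vert\eta\Vert\,\vert A\vert\,\Vert g\Vert_{\infty}$; let $\theta_{g}\in L^{\infty}(\varOmega)$ be its Radon--Nikodym density, with $\Vert\theta_{g}\Vert_{\infty}\le\Vert\eta\Vert\Vert g\Vert_{\infty}$. Carrying this out over a countable dense $\mathbb{Q}$-linear subspace $D$ of $C(K)$ and deleting one null set, one obtains for a.e.\ $x$ a $\mathbb{Q}$-linear functional $g\mapsto\theta_{g}(x)$ on $D$ of norm $\le\Vert\eta\Vert$; it extends uniquely and continuously to an element of $C(K)^{\ast}$, which the Riesz theorem represents by some $\nu_{x}\in\mathrm{rca}(K)$. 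Then $x\mapsto\nu_{x}$ is weakly$^{\ast}$ measurable because $x\mapsto\int_{K}g\,d\nu_{x}=\theta_{g}(x)$ is measurable for every $g\in C(K)$, its $L^{\infty}_{w^{\ast}}$-norm is $\le\Vert\eta\Vert$, and $\psi(\nu)$ coincides with $\eta$ on every $\chi_{A}\otimes g$, hence on the dense linear span, hence everywhere. The main obstacle is exactly this surjectivity step: amalgamating the scalar densities $\theta_{g}$, each defined only up to a $g$-dependent null set, into a single genuinely weakly$^{\ast}$ measurable $\mathrm{rca}(K)$-valued map. Separability of $C(K)$---available here because $K$ is a compact metric space---is precisely what lets the exceptional null sets be consolidated and the general theory of liftings be bypassed; everything else reduces to the scalar Riesz theorem, the Radon--Nikodym theorem, and density of elementary tensors. (The statement is classical; the authors refer to \cite{Roubicek} for the full details.)
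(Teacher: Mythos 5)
The paper does not prove this theorem at all: it is quoted from \cite{Roubicek} (and the surrounding text itself attributes the identification to the Dunford--Pettis duality theorem), so there is no in-paper argument to compare against. Your outline is a correct reconstruction of that standard proof of $L^{1}(\varOmega,C(K))^{\ast}\cong L^{\infty}_{w^{\ast}}(\varOmega,\mathrm{rca}(K))$ --- norm bound and injectivity from density of elementary tensors, isometry via a measurable selection of near-norming test functions, and surjectivity via Radon--Nikodym on a countable $\mathbb{Q}$-linear dense subspace of the separable space $C(K)$ followed by consolidation of the null sets --- and you correctly identify the null-set amalgamation in the surjectivity step as the only delicate point. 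No gaps worth flagging; this is essentially the cited proof.
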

The set of the Young measures on the compact set $K\subset\mathbb{R}^l$ will be denoted by $\mathcal{Y}(\varOmega ,K)$:
\[
\mathcal{Y}(\varOmega ,K):=\bigl\{\nu=(\nu (x))\in
L^{\infty}_{w^\ast}(\varOmega,\textnormal{rca}(K)):\nu_x\in
\textnormal{rca}^1(K)\;\textnormal{for a.a }x\in\varOmega\bigr\}.
\]
We will write $\nu_x$ or $(\nu_x)_{x\in\varOmega}$ instead of $\nu (x)$.

Finally, we define the Dirac mapping $\delta$: $\forall x\in\varOmega$
\begin{equation} \label{delta}
\delta\colon\mathcal{U}\ni f\to[\delta(f)](x):= \delta_{f(x)}\in
\mathcal{Y}(\varOmega ,K)
\end{equation}
\begin{theorem} \label{diagram}
The diagram
\[
\begin{xy}
{\ar@{->} (0,0)*+{\mathcal{U}}; (-25,-20)*+{\mathcal{Y}(\varOmega;K)}}?*!/^3mm/{\delta};
{\ar@{->} (0,0)*+{\mathcal{U}}; (25,-20)*+{Y(\varOmega;K)}}?*!/_3mm/{i};
{\ar@{<->} (-25,-20)*+{\mathcal{Y}(\varOmega;K)}; (25,-20)*+{Y(\varOmega;K)}}?*!/^5mm/{\psi};
\end{xy}
\]
is commutative.
\end{theorem}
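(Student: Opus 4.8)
The plan is to unwind what ``the diagram is commutative'' means and reduce it to a single identity. Commutativity asserts that the two paths $\mathcal U\xrightarrow{\delta}\mathcal Y(\varOmega;K)\xrightarrow{\psi}Y(\varOmega;K)$ and $\mathcal U\xrightarrow{i}Y(\varOmega;K)$ agree, i.e.\ that $\psi(\delta(f))=i(f)$ for every $f\in\mathcal U$. So I would fix $f\in\mathcal U$ and first check that all three objects are legitimate: $i(f)$ lies in $i(\mathcal U)\subseteq Y(\varOmega;K)$ by definition of the latter; and $[\delta(f)](x)=\delta_{f(x)}$ is a probability measure on $K$ for every $x$, while $x\mapsto\langle\delta_{f(x)},g\rangle=g(f(x))$ is measurable for each $g\in C(K)$ since $f$ is measurable and $g$ continuous, so $\delta(f)$ is weakly$^\ast$ measurable and is indeed an element of $\mathcal Y(\varOmega;K)$. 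Theorem~\ref{isomorphism} then guarantees that $\psi$ is defined on it and returns an element of $L^1(\varOmega,C(K))^\ast$.

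The heart of the proof is the computation of that element. For an arbitrary $h\in L^1(\varOmega,C(K))\cong Car(\varOmega,K;\mathbb{R})$, the definition of $\psi$ gives
\[
\langle\psi(\delta(f)),h\rangle
=\int\limits_{\varOmega}\Bigl(\int\limits_K h(x,k)\,d\delta_{f(x)}(k)\Bigr)dx
=\int\limits_{\varOmega}h(x,f(x))\,dx
=\langle i(f),h\rangle,
\]
where the middle equality is nothing more than the defining property of the point mass $\delta_{f(x)}$ tested against the continuous function $h(x,\cdot)$. Since $h$ is arbitrary this yields $\psi(\delta(f))=i(f)$ in $L^1(\varOmega,C(K))^\ast$, and since $f$ is arbitrary the triangle commutes.

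I do not expect a genuine obstacle here; the only points deserving attention are bookkeeping ones. One is the well-definedness noted above, namely weak$^\ast$ measurability of $x\mapsto\delta_{f(x)}$, so that $\delta$ really maps $\mathcal U$ into $\mathcal Y(\varOmega;K)$. The other is that the integrals in the display make sense: $x\mapsto h(x,f(x))$ is measurable by the Carath\'eodory property of $h$ together with measurability of $f$, and it is dominated by $x\mapsto\sup_{k\in K}|h(x,k)|$, which lies in $L^1(\varOmega)$ precisely because $\|h\|_{Car}<\infty$; hence no Fubini-type interchange is needed, each inner integral being a plain evaluation at $f(x)$. Once these routine verifications are in place, the Dirac identity does all the real work.
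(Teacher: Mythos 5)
Your proof is correct: the paper itself gives no proof of this theorem, citing Roub\'{\i}\v{c}ek instead, and your argument is precisely the standard one found there --- reduce commutativity to $\psi(\delta(f))=i(f)$ and verify it by the Dirac evaluation identity $\int_K h(x,k)\,d\delta_{f(x)}(k)=h(x,f(x))$. The bookkeeping points you flag (weak$^\ast$ measurability of $x\mapsto\delta_{f(x)}$ and integrability of $x\mapsto h(x,f(x))$ via the $Car$-norm bound) are exactly the right ones, so nothing is missing.
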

This means, that for any $f\in\mathcal{U}$ there exists a Young measure associated with it.
\begin{corollary} \label{corollaryWarga}
The set $\mathcal{Y}(\varOmega;K)$ of all Young measures is a convex, compact, sequentially compact set in which $\delta(U)$ is dense.
\end{corollary}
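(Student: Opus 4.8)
The plan is to carry every assertion through the isometric isomorphism $\psi$ of Theorem \ref{isomorphism} and then read the topological properties off the Banach--Alaoglu theorem, isolating the density statement as the only substantial point. First I would record that, by the commutativity in Theorem \ref{diagram}, $i=\psi\circ\delta$ on $\mathcal U$, so that $i(\mathcal U)=\psi(\delta(\mathcal U))$. Once $L^{\infty}_{w^\ast}(\varOmega,\mathrm{rca}(K))$ carries the topology that $\psi$ transports from the weak$^\ast$ topology of $L^1(\varOmega,C(K))^\ast$ — which is precisely the topology in which Young measures are declared to converge — the map $\psi$ is by construction a homeomorphism, whence
\[
Y(\varOmega,K)=\overline{i(\mathcal U)}^{\,w^\ast}=\psi\bigl(\overline{\delta(\mathcal U)}^{\,w^\ast}\bigr).
\]
It therefore suffices to show $\overline{\delta(\mathcal U)}^{\,w^\ast}=\mathcal Y(\varOmega,K)$: the inclusion $\subseteq$ amounts to "$\mathcal Y(\varOmega,K)$ is weak$^\ast$ closed'', and $\supseteq$ is the density claim. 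Convexity, compactness and sequential compactness I would then verify directly on $\mathcal Y(\varOmega,K)$.

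For the routine parts I would argue as follows. $\mathcal Y(\varOmega,K)$ is convex because $\mathrm{rca}^1(K)$ is convex and the constraint $\nu_x\in\mathrm{rca}^1(K)$ is imposed for a.a.\ $x$, so it is stable under pointwise convex combinations. It is weak$^\ast$ closed because non‑negativity and the normalisation $\nu_x(K)=1$ survive weak$^\ast$ limits (test against $h(x,k)=w(x)\varphi(k)$ with $w\ge 0$, $\varphi\ge 0$, respectively with $\varphi\equiv 1$). Every $\nu\in\mathcal Y(\varOmega,K)$ satisfies $\|\nu\|=\textnormal{ess}\sup_x\|\nu_x\|_{\mathrm{rca}(K)}=1$, so its image under $\psi$ lies in the closed unit ball of $L^1(\varOmega,C(K))^\ast$; by Banach--Alaoglu that ball is weak$^\ast$ compact, and a weak$^\ast$ closed subset of it is weak$^\ast$ compact, giving compactness. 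Finally, $K$ being compact metric, $C(K)$ is separable, and $\varOmega$ being bounded, $L^1(\varOmega,C(K))$ is separable; hence the weak$^\ast$ topology is metrizable on bounded sets and compactness upgrades to sequential compactness.

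The hard part is the density of $\delta(\mathcal U)$, which I would obtain by the classical fine‑oscillation construction. A basic weak$^\ast$ neighbourhood of a given $\nu\in\mathcal Y(\varOmega,K)$ is cut out by finitely many test functions, and these may be taken of the form $h(x,k)=w(x)\varphi(k)$ with $w\in L^1(\varOmega)$, $\varphi\in C(K)$, since such products span a dense subspace; so it suffices, given $\nu$, finitely many pairs $(w_j,\varphi_j)$ and $\varepsilon>0$, to produce $f\in\mathcal U$ with $\bigl|\int_\varOmega w_j(x)\varphi_j(f(x))\,dx-\int_\varOmega\!\int_K\varphi_j(k)\,d\nu_x(k)\,w_j(x)\,dx\bigr|<\varepsilon$ for every $j$. \emph{Reduction 1:} partition $\varOmega$ into cubes $Q$ and replace $\nu$ by the piecewise homogeneous Young measure equal on each $Q$ to $\mu_Q:=\tfrac1{|Q|}\int_Q\nu_y\,dy\in\mathrm{rca}^1(K)$; since $x\mapsto\langle\nu_x,\varphi_j\rangle$ lies in $L^\infty(\varOmega)$, the Lebesgue differentiation theorem makes $x\mapsto\langle\mu_{Q(x)},\varphi_j\rangle$ converge a.e., hence in $L^1(\varOmega)$, to $x\mapsto\langle\nu_x,\varphi_j\rangle$ as the mesh shrinks, so for a fine enough partition this costs less than $\varepsilon/2$ against all $w_j\varphi_j$. \emph{Reduction 2:} on a single cube $Q$, first approximate $\mu_Q$ weakly by a finitely supported probability measure $\sum_i\lambda_i\delta_{a_i}$ on the compact set $K$, then split $Q$ into measurable pieces $Q_i$ with $|Q_i|=\lambda_i|Q|$ and put $f\equiv a_i$ on $Q_i$; a change‑of‑variables computation of the kind used elsewhere in this paper gives $\tfrac1{|Q|}\int_Q\varphi_j(f)=\sum_i\lambda_i\varphi_j(a_i)\approx\langle\mu_Q,\varphi_j\rangle$. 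Patching these local functions over all cubes produces $f\in\mathcal U$ whose $\delta(f)$ is $\varepsilon$‑close to $\nu$, which proves density and hence the corollary. I expect Reduction 1 — uniform control of the measurable $x$‑dependence of $\nu_x$ — together with the bookkeeping of the patching to be where the real care is needed; the rest is soft functional analysis.
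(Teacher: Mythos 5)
The paper does not actually prove Corollary~\ref{corollaryWarga}: it is imported verbatim from Roub\'{\i}\v{c}ek's book, which is cited for the proof. Your argument is, in its architecture, the same as the standard one found there — transport everything through $\psi$, get convexity from convexity of $\mathrm{rca}^1(K)$, compactness from Banach--Alaoglu plus weak$^\ast$ closedness, sequential compactness from separability of the predual $L^1(\varOmega,C(K))$, and density of the Dirac measures from averaging over a fine partition followed by a finite-support approximation realized by a piecewise constant function. All of the ``soft'' parts are correct (the a.e.\ positivity/normalization in the closedness step needs the separability of $C(K)$ to extract a single null set, but that is routine).

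There is one genuine gap, in your Reduction~2. After choosing $Q_i\subset Q$ with $|Q_i|=\lambda_i|Q|$ and setting $f\equiv a_i$ on $Q_i$, you verify only that $\tfrac1{|Q|}\int_Q\varphi_j(f(x))\,dx=\sum_i\lambda_i\varphi_j(a_i)$, whereas the quantity you must control is $\int_Q w_j(x)\varphi_j(f(x))\,dx$ with $w_j\in L^1$ not constant on $Q$. For an arbitrary measurable choice of the $Q_i$ this can fail badly: if $w_j$ is concentrated near $Q_1$, then $\int_Q w_j\varphi_j(f)\approx\varphi_j(a_1)\int_Q w_j$, not $\sum_i\lambda_i\varphi_j(a_i)\int_Q w_j$. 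You need one more approximation: either replace each $w_j$ by a function $\tilde w_j$ constant on the cubes of the partition (the error is at most $\|\varphi_j\|_{C(K)}\|w_j-\tilde w_j\|_{L^1}$, which can be made small since such $\tilde w_j$ are dense in $L^1$), after which your computation goes through on each cube; or, equivalently, subdivide $Q$ into many tiny cells and allocate the proportions $\lambda_i$ inside each cell, so that $\varphi_j(f)\rightharpoonup^{\ast}\sum_i\lambda_i\varphi_j(a_i)$ in $L^\infty(Q)$ and can be tested against any $w_j\in L^1$. With that repair the proof is complete and coincides with the cited one.
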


\subsection[]{An auxiliary lemma}

Denote by $\mu$ the normalized Lebesgue measure on a nonempty, bounded subset $\varOmega$ of $\mathbb{R}^d$ with positive measure $M$: $d\mu(x):=\tfrac 1 Mdx$ with a $d$-dimensional Lebesgue measure $dx$. Let $\{\varOmega\}_{i=1}^n$ be a partition of $\varOmega$ into open, pairwise disjoint subsets $\varOmega_i$ with Lebesgue measure $m_i>0$, such that $\bigcup\limits_{i=1}^ncl(\varOmega_i)=cl(\varOmega)$, where '$cl$' stands for 'closure'. By $\textbf{1}_A$ we denote the characteristic function of the set $A$.
\begin{theorem}(see e.g. \cite{Puchala})\label{Explicit_Ja}
Choose and fix points $p_i\in\mathbb{R}^l$, $i=1,2,\dots,n$, and let $f$
be a simple function:
\[
f:=\sum\limits_{i=1}^np_i\textbf{1}_{\varOmega_i}. 
\]
Then the Young measure associated with $f$ is of the form
\[
\nu_x=\frac 1 M\sum\limits_{i=1}^nm_i\delta_{p_i}.
\]
\end{theorem}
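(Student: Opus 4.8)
The plan is to produce, from the data of $f$, a sequence $(g_k)\subset\mathcal U$ of rapidly oscillating functions that generates the homogeneous family $\nu_x\equiv\tfrac1M\sum_{i=1}^n m_i\delta_{p_i}$, and then to invoke Theorems~\ref{isomorphism} and~\ref{diagram} together with Corollary~\ref{corollaryWarga} to read off that $\nu=(\nu_x)\in\mathcal Y(\varOmega,K)$ is the Young measure associated with $f$ in the sense of this paper (the oscillatory, homogeneous limit attached to the distribution of the values of $f$). Since a bounded functional on $L^1(\varOmega,C(K))$ is determined by its values on the products $\varphi(\,\cdot\,)w(\,\cdot\,)$, $\varphi\in C(\mathbb R^l)$, $w\in L^1(\varOmega)$ (these span a dense subspace), and since $i(g_k)\xrightarrow{w^\ast}\psi(\nu)$ is then equivalent to $\varphi(g_k)\xrightarrow{w^\ast}\tfrac1M\sum_{i=1}^n m_i\varphi(p_i)$ in $L^\infty(\varOmega)$ for every $\varphi$, the whole statement reduces to this last weak$^\ast$ convergence. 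Note also that $\tfrac1M\sum_i m_i\delta_{p_i}$ is exactly the image measure $f_\ast\mu$, i.e.\ the distribution of $f(U)$ for $U$ uniform on $\varOmega$, which is why this computation is the simple-function instance of the paper's main result.

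To build $(g_k)$ I would proceed as follows. Fix a measurable partition $\{E_1,\dots,E_n\}$ of the unit cube $(0,1)^d$ with $|E_i|=m_i/M$, and set $g_0:=\sum_{i=1}^n p_i\mathbf 1_{E_i}$, so the value of $g_0$ has distribution $\tfrac1M\sum_i m_i\delta_{p_i}$. Using inner regularity of Lebesgue measure, exhaust $\varOmega$ up to a set of measure $<\varepsilon$ by finitely many pairwise disjoint closed cubes; tile each of these cubes by $k^d$ congruent subcubes, and on each subcube $Q$ put the rescaled copy $g_0\circ A_Q^{-1}$ of $g_0$, where $A_Q$ is the affine bijection from $(0,1)^d$ onto $Q$, choosing $g_k$ arbitrarily on the exceptional set. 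Then $g_k\in\mathcal U$, and in fact $g_k$ converges weak$^\ast$ in $L^\infty(\varOmega)$ to the constant $\tfrac1M\sum_i m_i p_i$, so the framework in which a Young measure is associated with a sequence applies; a diagonal choice in $k$ and $\varepsilon$ yields the final sequence.

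The remaining computation is short, and this is where the change of variable theorem enters. For $\varphi\in C(\mathbb R^l)$ and a full subcube $Q$ as above, the substitution $y=A_Q^{-1}x$ gives $\int_Q\varphi(g_k(x))\,dx=|Q|\int_{(0,1)^d}\varphi(g_0(y))\,dy=|Q|\cdot\tfrac1M\sum_{i=1}^n m_i\varphi(p_i)$, exactly; summing over subcubes, $\int_B\varphi(g_k(x))\,dx\to|B|\cdot\tfrac1M\sum_{i=1}^n m_i\varphi(p_i)$ for every box $B\subset\varOmega$, the boundary subcubes and the exceptional set contributing an error $O(1/k)+O(\varepsilon)$. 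Since $\{\varphi(g_k)\}$ is bounded in $L^\infty(\varOmega)$ and finite linear combinations of indicators of boxes are dense in $L^1(\varOmega)$, the desired convergence $\varphi(g_k)\xrightarrow{w^\ast}\tfrac1M\sum_i m_i\varphi(p_i)$ follows; by the reduction above, $\nu_x=\tfrac1M\sum_i m_i\delta_{p_i}$ is the Young measure associated with $f$.

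I expect the main obstacle to be not the analysis but the construction of $(g_k)$ for a general bounded open $\varOmega$ and a general admissible partition $\{\varOmega_i\}$: one must reproduce the value statistics of $f$ at vanishing scale over a set of arbitrary shape, control the two error terms uniformly in the test pair $(\varphi,w)$, and make sure the set $\varOmega\setminus\bigcup_i\varOmega_i$ is Lebesgue-null so that $\sum_i m_i=M$ and $\nu_x$ is a probability measure. In the one-dimensional case with an interval partition all of this is transparent — $g_k$ is just the $(|\varOmega|/k)$-periodic rescaling of $f$ — which is the elementary route taken in \cite{Puchala}; the present paper's contribution is precisely to dispense with any such construction and handle arbitrary Borel $f$ directly.
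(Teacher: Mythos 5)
The paper does not actually prove this theorem: it imports it from \cite{Puchala}, where the argument is a single application of the change of variable theorem --- for $\varphi\in C(K)$ one computes $\tfrac1M\int_{\varOmega}\varphi(f(x))\,dx=\tfrac1M\sum_{i=1}^n m_i\varphi(p_i)$, i.e.\ one recognizes the measure in the statement as the image $f_\ast\mu$ of the normalized Lebesgue measure under the simple function $f$. Your route is genuinely different: you exhibit the homogeneous family as the Young measure \emph{generated by a sequence} $(g_k)$ of oscillating rearrangements of $f$, in the Young--Ball sense. The tiling construction, the per-cube substitution $y=A_Q^{-1}x$, the $O(1/k)+O(\varepsilon)$ error control, and the reduction to test functions of the form $w\otimes\varphi$ are all sound, and what this buys is something the bare citation does not: a verification that $\psi(\nu)$ lies in $Y(\varOmega,K)$ exactly as that set is defined here, namely as a weak$^\ast$ limit of elements $i(g_k)$ with $g_k\in\mathcal U$. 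You also correctly flag that $\sum_i m_i=M$ needs $\varOmega\setminus\bigcup_i\varOmega_i$ to be null, which the hypothesis on closures alone does not guarantee.

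The one step you should not pass over is ``\dots to read off that $\nu$ is the Young measure associated with $f$.'' Nothing in your argument ties the limit of $i(g_k)$ to $f$ itself: $i(g_k)$ does not converge to $i(f)$ (test against $h(x,k)=w(x)\varphi(k)$ with $w$ supported in a single $\varOmega_i$), and the object that the commutative diagram of Theorem~\ref{diagram} attaches to $f$ is $\delta(f)=(\delta_{f(x)})_{x\in\varOmega}$, the $x$-dependent family equal to $\delta_{p_i}$ on $\varOmega_i$ --- indeed the proof of Lemma~\ref{aux_lemma} uses $\nu^f=\delta_{f(x)}$. The homogeneous family of the statement is the average $\tfrac1M\int_{\varOmega}\delta_{f(x)}\,dx=f_\ast\mu$, and identifying that average with ``the Young measure associated with $f$'' is a convention inherited from \cite{Puchala} (and consistent with Theorem~\ref{main}), not a consequence of your construction. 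So either adopt that convention explicitly and obtain $f_\ast\mu=\tfrac1M\sum_i m_i\delta_{p_i}$ in one line by change of variables, making the oscillating sequence unnecessary, or keep the sequence but state precisely that what you prove is that this homogeneous measure is generated by rearrangements of $f$ --- a true statement, but not the same as ``$\psi^{-1}(i(f))$ is homogeneous,'' which is false for $n\ge 2$.
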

\begin{remark}
Observe that in this case $\nu_x$ is a homogeneous Young measure.
\end{remark}
\begin{definition}
The Young measure associated with simple function will be called a 
\textnormal{simple Young measure}.
\end{definition}
We now recall the notion of weak$^\ast$ convergence of measures on compact sets.
\begin{definition}
We say that a sequence $(\nu_n)$ of bounded measures on a compact set $K\subset\mathbb{R}^l$ converges weakly$^\ast$ to a measure $\nu_0$, if 
$\forall\beta\in C(K,\mathbb{R})$ there holds
\[
\lim\limits_{n\to\infty}\int\limits_K\beta(k)d\nu_n(k)=\int\limits_K\beta(k)d\nu_0(k).
\]
\end{definition}
We now prove a useful lemma.
\begin{lemma}\label{aux_lemma}
Let $f\colon\varOmega\to K$ be a measurable function and let $(f_n)$ be a pointwise convergent to $f$ sequence of simple functions. Then the Young measure $\nu^f$ associated with $f$ is a weak$^\ast$ limit of the sequence of the simple Young measures associated with respective elements of $(f_n)$.
\end{lemma}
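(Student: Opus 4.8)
The plan is to show that $i(f_n) \xrightarrow{w^*} \eta$ where $\eta = \psi(\nu^f)$, i.e.\ to verify that for every $h \in L^1(\varOmega, C(K)) \cong Car(\varOmega, K; \mathbb{R})$ we have
\[
\lim_{n\to\infty} \int_{\varOmega} h(x, f_n(x))\, dx = \int_{\varOmega} \int_K h(x,k)\, d\nu^f_x(k)\, dx.
\]
Once this is established, the commutativity of the diagram in Theorem~\ref{diagram} (together with the isometric isomorphism $\psi$ of Theorem~\ref{isomorphism}) immediately identifies the weak$^*$ limit of the $i(f_n)$ with the Young measure associated to $f$, and the weak$^*$ convergence of the simple Young measures $\nu^{f_n}$ to $\nu^f$ in $L^\infty_{w^*}(\varOmega, rca(K))$ follows since $\psi$ is a homeomorphism for the respective weak$^*$ topologies. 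So the whole lemma reduces to the displayed limit.

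To prove the displayed limit I would argue as follows. First I would note that since $f_n \to f$ pointwise on $\varOmega$ and, for each fixed $x$, the function $k \mapsto h(x,k)$ is continuous on the compact set $K$, continuity gives $h(x, f_n(x)) \to h(x, f(x))$ for a.e.\ (indeed every) $x \in \varOmega$. Second, I would produce an integrable dominating function: writing $g(x) := \sup_{k \in K} |h(x,k)|$, the Carath\'eodory/$L^1(\varOmega,C(K))$ structure guarantees $g \in L^1(\varOmega)$ with $\int_\varOmega g(x)\,dx = \|h\|_{Car} < \infty$, and clearly $|h(x, f_n(x))| \le g(x)$ for all $n$ and all $x$, since $f_n(x) \in K$. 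By the Lebesgue dominated convergence theorem,
\[
\lim_{n\to\infty} \int_{\varOmega} h(x, f_n(x))\, dx = \int_{\varOmega} h(x, f(x))\, dx.
\]
Third, the right-hand side equals $\int_\varOmega \int_K h(x,k)\, d\nu^f_x(k)\, dx$ by the very definition of the Young measure $\nu^f$ associated with $f$: indeed $\langle i(f), h\rangle = \int_\varOmega h(x,f(x))\,dx$ and $i(f) = \psi(\nu^f)$ by Theorem~\ref{diagram}, so $\int_\varOmega h(x,f(x))\,dx = \langle \psi(\nu^f), h\rangle = \int_\varOmega \int_K h(x,k)\,d\nu^f_x(k)\,dx$. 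Combining the three steps yields exactly the displayed limit.

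I expect the main (though still mild) obstacle to be the careful handling of measurability and the dominating function: one must make sure that $x \mapsto h(x, f_n(x))$ and $x \mapsto h(x, f(x))$ are genuinely measurable (this is where the Carath\'eodory property of $h$ and measurability of $f_n$, $f$ are used) and that $g(x) = \sup_{k\in K}|h(x,k)|$ is measurable and integrable — both facts being part of the standard theory of the space $Car(\varOmega, K; \mathbb{R})$ recalled above, so they can be invoked rather than reproved. A secondary point worth a sentence is that weak$^*$ convergence of the $\nu^{f_n}$ as stated in the lemma (testing against $\beta \in C(K)$ in the homogeneous picture, or against $h$ in the general picture) is precisely the image under the homeomorphism $\psi^{-1}$ of the weak$^*$ convergence $i(f_n) \xrightarrow{w^*} i(f)$ just established; here Theorem~\ref{Explicit_Ja} additionally tells us that each $\nu^{f_n}$ is the explicit convex combination of Dirac masses, which makes the statement concrete but is not logically needed for the convergence itself. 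No deep functional analysis is required beyond the already-quoted Theorems~\ref{isomorphism} and~\ref{diagram}; the engine is just dominated convergence.
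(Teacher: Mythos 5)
Your proposal is correct, and it takes a genuinely different route from the paper. The paper's proof works with a fixed $\beta\in C(K)$, first showing via the change of variable theorem that the sequence $(\nu_n)$ of simple Young measures is weak$^\ast$ Cauchy, then invoking the sequential compactness of $\mathcal{Y}(\varOmega;K)$ (Corollary~\ref{corollaryWarga}) to extract a weak$^\ast$ limit $\rho$, and finally identifying $\rho_{x_0}$ with $\delta_{f(x_0)}$ for each fixed $x_0$ by a two-step triangle inequality. You instead test $i(f_n)$ directly against an arbitrary $h\in L^1(\varOmega,C(K))$, apply dominated convergence with the dominating function $g(x)=\sup_{k\in K}|h(x,k)|$ (integrable precisely because $\|h\|_{Car}<\infty$), and identify the limit as $\langle i(f),h\rangle=\langle\psi(\nu^f),h\rangle$ in one stroke via Theorems~\ref{isomorphism} and~\ref{diagram}. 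Your version buys several things: it needs no compactness or Cauchy-sequence argument, since the limit is produced and identified simultaneously; it establishes convergence in the full weak$^\ast$ topology of $L^1(\varOmega,C(K))^\ast$ (all Carath\'eodory integrands, hence in particular all test functions of the form $w(x)\beta(k)$), which is the natural topology for non-homogeneous limits such as $\nu^f=\delta(f)$; and it avoids the most delicate step of the paper's argument, namely the pointwise-in-$x_0$ comparison of the homogeneous measure $\nu_n$ (a convex combination of Dirac masses) with the single Dirac mass $\delta_{f_n(x_0)}$, a step whose justification as written is far from immediate. The paper's approach, for its part, showcases the compactness structure of $\mathcal{Y}(\varOmega;K)$ and stays within the elementary picture of measures on $K$ tested against $\beta\in C(K)$. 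Your one residual care point — measurability of $x\mapsto h(x,f_n(x))$ and integrability of $g$ — is indeed standard for Carath\'eodory integrands and is fairly invoked rather than reproved.
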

\begin{proof}
 Choose and fix $\varepsilon>0$. Using change of variable theorem, continuity of the function $\beta$ and the finiteness of the measure of $\varOmega$, we infer the existence of $n_0\in\mathbb{N}$ such that $\forall\, m,n>n_0$ we have
\begin{multline}
\Bigl\vert\int\limits_K\beta(k)d\nu_n-\int\limits_K\beta(k)d\nu_m\Bigr\vert=
\Bigl\vert\int\limits_\varOmega\beta(f_n(x))d\mu-
\int\limits_\varOmega\beta(f_m(x))d\mu\Bigr\vert\leq \\ \notag
\leq\int\limits_\varOmega\vert\beta(f_n(x))-\beta(f_m(x))\vert d\mu\leq\varepsilon\cdot\mu(\varOmega).
\end{multline}
This means that $(\nu_n)$ is a weak$^\ast$ Cauchy sequence in $\mathcal{Y}(\varOmega;K)$, so by corollary \ref{corollaryWarga} there exists a weak$^\ast$ limit $\rho=(\rho_x)_{x\in\varOmega}$ of $(\nu_n)\in\mathcal{Y}(\varOmega;K)$. 

By the equation (\ref{delta}) $\nu^f=\delta_{f(x)}$. Now choose and fix $x_0\in\varOmega$. We then have
\begin{multline}
\Bigl\vert\int\limits_K\beta(k)d\rho_{x_0}-
\int\limits_K\beta(k)d\delta_{f(x_0)}\Bigr\vert\leq 
\Bigl\vert\int\limits_K\beta(k)d\rho_{x_0}-\int\limits_K\beta(k)d\nu_n\Bigr\vert+\\ \notag
+\Bigl\vert\int\limits_K\beta(k)d\nu_n-\int\limits_K\beta(k)d\delta_{f(x_0)}\Bigr\vert.
\end{multline}

The first term on the right-hand side is arbitrarily small since $\rho$ is a weak$^\ast$ limit of $(\nu_n)$. For the second term 
\begin{multline}
\Bigl\vert\int\limits_K\beta(k)d\nu_n-\int\limits_K\beta(k)d\delta_{f(x_0)}\Bigr\vert\leq
\Bigl\vert\int\limits_K\beta(k)d\nu_n-\int\limits_K\beta(k)d\delta_{f_n(x_0)}\Bigr\vert+\\ \notag
+\Bigl\vert\int\limits_K\beta(k)d\delta_{f_n(x_0)}-
\int\limits_K\beta(k)d\delta_{f(x_0)}\Bigr\vert.
\end{multline}
The first term above vanishes because $\nu_n$ is a homogeneous Young measure associated with the simple function $f_n$, $n\in\mathbb{N}$. The second term tends to $0$ as $n\to\infty$ for the sequence $(f_n)$ converges pointwise to $f$. 
\end{proof}
\begin{remark}
\begin{itemize}
\item[(i)]
observe that $\nu^f$ need not be a homogeneous Young measure;
\item[(ii)]
for more general versions of the above result see \cite{Balder1} and references cited there.
\end{itemize}
\end{remark}
\begin{corollary}
The set of all simple Young measures is weak$^\ast$ dense in the set of the Young measures associated with functions from $\mathcal{U}$.
\end{corollary}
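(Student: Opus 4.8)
The plan is to deduce the statement directly from Lemma~\ref{aux_lemma} together with the classical fact that a measurable function is a pointwise limit of simple functions. First I would recall, via Theorem~\ref{diagram} and the definition (\ref{delta}) of the Dirac mapping, that the Young measures associated with functions from $\mathcal{U}$ are precisely the elements $\nu^f=(\delta_{f(x)})_{x\in\varOmega}$ with $f\in\mathcal{U}$; and since every simple function is measurable, the simple Young measures form a subset of this collection. It therefore suffices to show that each $\nu^f$, $f\in\mathcal{U}$, lies in the weak$^\ast$ closure of the set of simple Young measures.

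Fix a measurable $f\colon\varOmega\to K$. The first step is to produce a sequence $(f_n)$ of simple functions with values in $K$ converging pointwise to $f$. This is the standard measure-theoretic construction; since $K$ is compact one may, for instance, fix a countable dense set $\{q_j\}_{j\in\mathbb{N}}\subset K$ and let $f_n(x)$ be the point $q_j$ with $j\le n$ closest to $f(x)$, ties being resolved by the least index. Measurability of $f$ makes each level set $f_n^{-1}(q_j)$ measurable, $f_n$ takes only finitely many values, and $f_n(x)\to f(x)$ for every $x\in\varOmega$.

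With such a sequence in hand, Lemma~\ref{aux_lemma} applies directly: the simple Young measures associated with the $f_n$ converge weak$^\ast$ to $\nu^f$. Hence $\nu^f$ belongs to the weak$^\ast$ closure of the set of simple Young measures, and since $f\in\mathcal{U}$ was arbitrary the asserted density follows. Combined with Corollary~\ref{corollaryWarga}, this would also give, a fortiori, that the simple Young measures are weak$^\ast$ dense in all of $\mathcal{Y}(\varOmega;K)$.

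I do not expect a serious obstacle here: the analytic heart of the matter --- upgrading pointwise convergence of the functions to weak$^\ast$ convergence of the associated Young measures --- is exactly the content of Lemma~\ref{aux_lemma}. The only points that need a little care are the identification of ``the Young measures associated with functions from $\mathcal{U}$'' with $\delta(\mathcal{U})$ through Theorem~\ref{diagram}, the observation that the simple Young measures sit inside this set, and --- should one wish the approximants to match exactly the form appearing in Theorem~\ref{Explicit_Ja} --- replacing the measurable level sets $f_n^{-1}(q_j)$ by open sets differing from them in arbitrarily small Lebesgue measure and discarding null pieces, which perturbs each (homogeneous) simple Young measure by an arbitrarily small amount in total variation and hence does not affect the conclusion.
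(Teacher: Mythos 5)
Your argument is correct and follows essentially the same route the paper intends: the corollary is stated as an immediate consequence of Lemma~\ref{aux_lemma} combined with the standard fact that every measurable $f\colon\varOmega\to K$ is a pointwise limit of simple functions. Your extra remark about adjusting the level sets $f_n^{-1}(q_j)$ to open sets so as to match the hypotheses of Theorem~\ref{Explicit_Ja} is a sensible refinement of a detail the paper glosses over, but it does not change the substance of the argument.
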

\section[]{Some necessary notions from probability theory and notation}
To set up notation, we recall now standard probabilistic notions needed in the sequel. If $\Sigma$ is a $\sigma$-algebra of subsets of a nonempty set $A$ and $P$ -- a measure on $\Sigma$, then the triple $(A,\,\Sigma,\, P)$ is called a measure space, and a \textit{probability space} if $P$ is a probability measure. A \textit{random va\-riable} (or a \textit{random vector}) $X\colon A\to\mathbb R^d$ is a function such that for any Borel set 
$B\subseteq\mathbb R^d$ there holds $X^{-1}(B)\in\Sigma$. Obviously, if $\varphi\colon\mathbb R^d\to\mathbb R^l$ is a Borel function, then $\varphi(X)$ is a random variable. The \textit{probability distribution} on $\mathbb R^d$ is any probability measure $P$ on the $\sigma$-algebra $\mathcal B(\mathbb R^d)$ of Borel subsets of $\mathbb R^d$. The probability distribution of a random variable $X$ with values in $\mathbb R^d$ is a probability measure $P_X$ on $\mathbb R^d$ defined for any $B\in\mathcal B(\mathbb R^d)$ by the equality $P_X(B):=P(X^{-1}(B))$. Consequently, for the  distribution of the random variable $\varphi(X)$ we have: 
for any $C\in\mathcal B(\mathbb R^l)$
\[
P_{\varphi(X)}(C)=P(\varphi(X)^{-1}(C))=P_X(\varphi^{-1}(C)).
\] 
If $P$ is a probability distribution on $\mathbb R^d$ and for some Lebesgue integrable function $g\colon\mathbb R^d\to\mathbb R$ there holds:
$\forall\, A\in\mathcal B(\mathbb R^d)\; P(A)=\int\limits_Ag(x)dx$, then the function $g$ is called a \textit{density} of $P$.

Let $\varOmega$ be a Borel subset of $\mathbb R^d$ with Lebesgue measure $M>0$. We say that random variable 
$U\colon\mathbb R^d\to\mathbb R^l$ is \textit{uniform} on $\varOmega$, if its density $g_u$ is of the form
\[
g_U(x)=
\begin{cases}
\tfrac 1 M, & x\in\varOmega\\
0, & x\notin\varOmega.
\end{cases}
\]
The probability distribution $P_U$ is then called the \textit{uniform distribution}.
\section[]{Main result}
As in the previous sections, let $\varOmega$ be an open subset of $\mathbb R^d$ with Lebesgue measure $M>0$, $d\mu(x) :=\tfrac{1}{M}dx$, where $dx$ is the $d$ -- 
dimensional Lebesgue measure on $\varOmega$ and let $K\subset\mathbb R^l$ be compact.
Denote $P:=\tfrac 1 Mdx$.

Finally, we are ready to formulate the main theorem of the article.
\begin{theorem}\label{main}
Let $f\colon\mathbb R^d\supset\varOmega\rightarrow K\subset\mathbb R^l$ be a Borel function with Young measure $\mu^f$. Then $\mu^f$ is the probability distribution of the random variable $Y=f(U)$, where $U$ has a uniform distribution on $\varOmega$.
\end{theorem}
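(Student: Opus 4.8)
The plan is to reduce the assertion to the case of simple functions, where it can be verified directly, and then to pass to the limit using the density of simple Young measures provided by Lemma \ref{aux_lemma}.

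First I would settle the simple case. Let $f=\sum_{i=1}^n p_i\textbf{1}_{\varOmega_i}$ be a simple function subordinate to an open partition $\{\varOmega_i\}$ of $\varOmega$ with $|\varOmega_i|=m_i$. By Theorem \ref{Explicit_Ja} its Young measure is the homogeneous measure $\nu_x=\tfrac1M\sum_{i=1}^n m_i\delta_{p_i}$. On the other hand, for any Borel set $C\subseteq\mathbb R^l$ the definition of the push‑forward measure gives
\[
P_{f(U)}(C)=P_U\bigl(f^{-1}(C)\bigr)=\frac1M\bigl|f^{-1}(C)\bigr|=\frac1M\sum_{i:\,p_i\in C}m_i,
\]
since $f^{-1}(C)$ differs from $\bigcup_{i:\,p_i\in C}\varOmega_i$ by a Lebesgue‑null set. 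Hence $P_{f(U)}=\nu_x$, so the theorem holds for simple $f$; equivalently, testing against $\beta\in C(K)$, one has $\int_K\beta\,dP_{f(U)}=\int_\varOmega\beta(f(x))\,d\mu(x)$.

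Now let $f\colon\varOmega\to K$ be an arbitrary Borel function. Using compactness of $K$ (which makes a suitable partition of the range finite), choose a sequence $(f_n)$ of simple functions converging to $f$ pointwise on $\varOmega$; this is precisely the measure‑theoretic lemma recalled in the Introduction. By Lemma \ref{aux_lemma} the Young measure $\mu^f$ is the weak$^\ast$ limit of the associated simple Young measures $\nu^{f_n}$, and by the first step $\nu^{f_n}=P_{f_n(U)}$ for every $n$. At the same time, for each $\beta\in C(K)$ dominated convergence applies — $\beta$ is bounded on the compact set $K$, $f_n\to f$ pointwise, and $\mu(\varOmega)=1$ — so
\[
\int_K\beta\,dP_{f_n(U)}=\int_\varOmega\beta\bigl(f_n(x)\bigr)\,d\mu(x)\xrightarrow[n\to\infty]{}\int_\varOmega\beta\bigl(f(x)\bigr)\,d\mu(x)=\int_K\beta\,dP_{f(U)},
\]
i.e. $P_{f_n(U)}\to P_{f(U)}$ weakly$^\ast$ on $K$. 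As weak$^\ast$ limits of measures on the compact set $K$ are unique, $\mu^f=P_{f(U)}$, which is the claim.

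The substantive work is already contained in Theorem \ref{Explicit_Ja} and Lemma \ref{aux_lemma}; for the theorem itself the computations are elementary and the only point that calls for attention is the compatibility of the two limiting procedures — namely, that the weak$^\ast$ convergence $\nu^{f_n}\to\mu^f$ furnished by Lemma \ref{aux_lemma} in $\mathcal{Y}(\varOmega;K)$ reduces, for these homogeneous members, to ordinary weak$^\ast$ convergence of measures on $K$ in the sense of the Definition preceding that lemma (so that, in particular, $\mu^f$ is again homogeneous). Once this identification is in place, the argument closes at once.
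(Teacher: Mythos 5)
Your proposal is correct and follows essentially the same route as the paper: verify the identity $\mu^f=P_{f(U)}$ directly for simple functions via Theorem \ref{Explicit_Ja}, then pass to a general Borel $f$ by pointwise approximation with simple functions, combining Lemma \ref{aux_lemma} with dominated convergence. You are in fact somewhat more careful than the paper at the two points it glosses over — the explicit computation of $P_{f(U)}$ for a general simple function (the paper does only the constant case and invokes linearity) and the identification of weak$^\ast$ convergence in $\mathcal{Y}(\varOmega;K)$ with weak$^\ast$ convergence of the homogeneous measures on $K$.
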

\begin{proof}
The distribution of a random variable $Y$ is of the form:
$\forall\,C\in\mathcal B(K)$, $P_{f(U)}(C)=P_U(f^{-1}(C))$. Let $f$ be constant on $\varOmega$ with value $p$ and vanish on the complement of $\varOmega$. By theorem
\ref{Explicit_Ja} we have
$\mu^f=\delta_p$. For any $C\subseteq K$ we have
\[
\mu^f(C)=\int\limits_{\mathbb R^l}\textbf 1_C(p)d\delta_p=
\begin{cases}
1, & p\in C\\
0, & p\notin C.
\end{cases}
\]
On the other hand,
\[
P_U(f^{-1}(C))=\int\limits_{f^{-1}(C)}g_udP=\frac 1 M\int\limits_{\{x:f(x)\in C\}}dx=
\begin{cases}
\tfrac 1 M\cdot M=1, & p\in C\\
\tfrac 1 M\cdot 0=0, & p\notin C.
\end{cases}
\]
Thus $P_Y=\mu^f$. This equality also holds when $f$ is a simple function, due to the linearity of the integral. Since functions under consideration have values in the compact set $K$, lemma \ref{aux_lemma} and the dominated convergence theorem yields the result for any Borel $f$.
\end{proof}
\section[]{Some applications and comments}

Theorem  \ref{main} provides direct link between the  Young measure basic concepts and the probability theory. This allows a wealth of probabilistic tools to be used to derive the explicit forms of the density functions of Young measures in many practically interesting  cases. 
To illustrate this point let us consider the following problem.

Let $\varOmega$ be as at the beginning of the previous section. Consider $\{\varOmega\}$ -- an open partition of $\varOmega$ into at most countable number of open subsets $\varOmega_1,\varOmega_2,\dots,\varOmega_n,\dots$ such that
\begin{itemize}
\item[(i)]
the elements of $\{\varOmega\}$ are pairwise disjoint;
\item[(ii)]
$\bigcup\limits_{i}\overline{\varOmega}_i=\overline{\varOmega}$, where $\overline{A}$ denotes the closure of the set $A$.
\end{itemize}

Let us consider   functions $f_i\colon\varOmega_i\to K\subset\mathbb{R}^d$, $i=1,2,...$, with inverses $f_i^{-1}$  that are  continuously differentiable on $f(\varOmega_i)$ and let $K_i:=\overline{f(\varOmega_i)}$ be compact. Denote for each  $i=1,2,...$ the Jacobian matrix of $f_i^{-1}$ by $J_{f_i^{-1}}$.

Let a  function $f\colon\varOmega\to K$, with $K:=\overline{f(\varOmega)}$ compact, be such that
\begin{equation}\label{uncountable}
f(x)=\sum\limits_{i} f_i(x)\chi_{\varOmega_i}(x)\ ,\ x\in \bigcup\limits_{i}\varOmega_i . 
\end{equation}

Then the following result holds.
\begin{proposition}\label{Prop}
The Young measure associated with Borel function $f$ satisfying (\ref{uncountable}) is a homogeneous one and its density $g$  with respect to the Lebesgue measure on $K$ is of the following form
\begin{equation}\label{density}
g(y)=\frac 1 M\sum\limits_{i:y\in\varOmega_i }\vert J_{f_i^{-1}}(y)\vert
\end{equation}
\end{proposition}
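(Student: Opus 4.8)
The plan is to apply Theorem \ref{main} to reduce the computation of $\mu^f$ to finding the distribution of the random variable $Y = f(U)$, where $U$ is uniform on $\varOmega$, and then to compute that distribution's density via the classical change-of-variables (transformation) formula for densities of transformed random vectors. First I would note that, since the partition elements $\varOmega_i$ are pairwise disjoint and cover $\varOmega$ up to a null set (by (ii)), the event $\{U \in \varOmega\}$ decomposes into the disjoint events $\{U \in \varOmega_i\}$, and conditionally on $\{U \in \varOmega_i\}$ the vector $U$ is uniform on $\varOmega_i$ while $f$ agrees with $f_i$ there. Hence for any $C \in \mathcal{B}(K)$,
\[
P_Y(C) = P_U(f^{-1}(C)) = \sum_i P_U\bigl(\{x \in \varOmega_i : f_i(x) \in C\}\bigr) = \frac{1}{M}\sum_i \bigl| \{x \in \varOmega_i : f_i(x) \in C\} \bigr|,
\]
where $|\cdot|$ denotes $d$-dimensional Lebesgue measure.

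Next I would handle each summand by the change-of-variables theorem. Since $f_i$ has a continuously differentiable inverse $f_i^{-1}$ on $f(\varOmega_i)$, the map $y \mapsto f_i^{-1}(y)$ is a $C^1$-diffeomorphism from $f(\varOmega_i)$ onto $\varOmega_i$ with Jacobian $J_{f_i^{-1}}(y)$, so substituting $x = f_i^{-1}(y)$ gives
\[
\bigl| \{x \in \varOmega_i : f_i(x) \in C\} \bigr| = \int_{\varOmega_i} \mathbf{1}_C(f_i(x))\,dx = \int_{f_i(\varOmega_i) \cap C} \bigl| J_{f_i^{-1}}(y) \bigr|\,dy.
\]
Summing over $i$ and interchanging sum and integral (justified by Tonelli's theorem, since every term is nonnegative), I obtain $P_Y(C) = \int_C g(y)\,dy$ with $g$ as in (\ref{density}), where the sum in (\ref{density}) runs over those $i$ for which $y \in f_i(\varOmega_i)$; identifying $g$ as the density of $P_Y = \mu^f$ then finishes the proof. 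Homogeneity of $\mu^f$ is immediate because the resulting measure does not depend on $x \in \varOmega$ — indeed any Young measure arising as a distribution $P_{f(U)}$ via Theorem \ref{main} is automatically homogeneous.

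The main obstacle is bookkeeping rather than a deep difficulty: one must be careful that the sets $f_i(\varOmega_i)$ may overlap, so a given $y$ can receive contributions from several indices $i$ (this is exactly why (\ref{density}) has a sum over $\{i : y \in \varOmega_i\}$, which I read as the set of indices with $y$ in the range $f_i(\varOmega_i)$), and that the boundary sets $\overline{\varOmega}_i \setminus \varOmega_i$ are Lebesgue-null so they do not affect any of the integrals. One should also remark that measurability of $f$ as a Borel function, needed to invoke Theorem \ref{main}, follows from the $\varOmega_i$ being Borel and each $f_i$ being continuous (being a diffeomorphism onto its image); and the convergence of the series defining $g$ for a.e.\ $y$ is guaranteed since its integral over $K$ equals $P_Y(K) = 1 < \infty$.
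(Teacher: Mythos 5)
Your proposal is correct and follows exactly the route the paper intends: the paper does not write out a detailed proof but simply observes that Proposition \ref{Prop} follows from Theorem \ref{main} combined with the standard change-of-variables formula for densities of transformed random vectors, which is precisely the argument you carry out (including the sensible reading of the index set in (\ref{density}) as $\{i : y \in f_i(\varOmega_i)\}$). Your version is a faithful, more detailed elaboration of the same approach, with the Tonelli and null-boundary bookkeeping made explicit.
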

The above result is a conclusion of  our main Theorem  \ref{main} and the general probabilistic results concerning the distributions of the functions of random vectors/variables, (compare \cite{Soong} or the classial work of Hoog and Craig \cite{Hoog} ).  

Observe, that the above Proposition \ref{Prop}  extends the results stated in Propositions 3.2, 3.4 in\cite{Puchala} which are proven there directly on the basis of  Young measure notion. 

As an example of a more specific application of Proposition \ref{Prop} let us consider the one-dimensional case ($d=1$) and the following function $f\colon]0,1[\to ]0,1[$:

\begin{equation}\label{specificB}
f(x)=\sum\limits_{i=2}^\infty (n x - 1)\chi_{\big[\frac{1}{n},\frac{1}{n-1}\big[}(x) 
\end{equation}

The function given by (\ref{specificB}) is of the form (\ref{uncountable}), thus by the Proposition \ref{Prop}, the Young measure associated with this  $f$  has the density function $g$ that is a piecewise constant one and given by the following formula:
\begin{equation}\label{specificYM}
g(y)=(H_n-1)\chi_{\big[\frac{1}{n},\frac{1}{n-1}\big[}(y)\ ,\ y\in  ]0,1[ 
\end{equation}
where $H_n$  stands for the $n$-th harmonic number. 

Due to the main Theorem \ref{main} various other results e.g. related to Borel functions  with different  dimensions of their domains and  images,  can be obtained with the help of other known probabilistic results concerning distributions of functions of random variables or vectors. Such results are  of particular importance in the engineering  practice, as they allow us the determination of specific values of Young's functionals, either directly or, in the more complex cases, by the Monte Carlo simulation

(for the latter see \cite{AZG_PP_1} and \cite{AZG_PP_2}).


\begin{thebibliography}{11}
\bibitem{Attouch}
\textsc{Attouch, H.}, \textsc{Buttazzo, G.} and \textsc{Michaille, G.}(2006). \textit{Variational Analysis in
Sobolev and BV Spaces: Applications to PDEs and Optimization}, SIAM and MPS, Philadelphia, USA, 2006.


\bibitem{Balder1}
\textsc{Balder, E.J.}(1997).
Consequences of Denseness of Dirac Young Measures.
\textit{J. Math. Anal. Appl.} \textbf{207}, 536 -- 540.

\bibitem{Balder2}
\textsc{Balder, E.J.}(2000).
Lecture on Young Measures Theory and its Applications in Economics.
\textit{Rend. Instit. Mat. Univ. Trieste} \textbf{Vol. XXXI Suppl. 1}, 1--69.


\bibitem{Ball}
\textsc{Ball, J.M.}(1989).
A version of the fundamental theorem for Young measures, in \textit{PDE's and Continuum Models of Phase Transitions}, Lecture Notes in Phys. 344, 207--215, Springer Verlag.


\bibitem{Florescu}
\textsc{Florescu, L.C.} and \textsc{Godet-Thobie, Ch.}(2012).
\textit{Young Measures and Compactness in Measure Spaces}, Walter de Gruyter GmbH \& Co. KG, Berlin/Boston.


\bibitem{Gasinski}
\textsc{Gasi\'nski, L.} and \textsc{Papageorgiou, N.S.}(2006).
\textit{Nonlinear Analysis}, Series in Mathematical Analysis and Applications, vol. 9, Chapman \& Hall/CRC, Boca Raton, London, New York, Singapore.


\bibitem{AZG_PP_1}
\textsc{Grzybowski, Andrzej Z.}, \textsc{Pucha{\l}a, P.}(2015).
Remarks about discrete Young measures and their Monte Carlo simulation.
\textit{J. Appl.Math. Comput. Mech.} \textbf{14.} No. 2, 13 -- 20.

\bibitem{AZG_PP_2}
\textsc{Grzybowski, Andrzej Z.}, \textsc{Pucha{\l}a, P.}(2015).
Monte Carlo simulation of the Young measures -- Comparison of Random-Number Generators, Proceedings of 2015 IEEE 13th International Scientific Conference on Informatics, Novitzk\'a, V., Kore\v{c}ko, \v{S}., Szak\'al, A. (editors);  http://ieeexplore.ieee.org/xpl/mostRecentIssue.jsp?punumber=7369475 .


\bibitem{Hoog}
\textsc{Hoog, R.V.} and \textsc{Craig, A.T.}(1978).
\textit{Introduction to Mathematical Statistics},  Macmillan Publishing Co., Inc., New York.

\bibitem{Malek}
\textsc{M\'{a}lek, J.}, \textsc{Ne\u{c}as, J.}, \textsc{Rokyta,M.} and \textsc{R\r{u}\u{z}i\u{c}ka, M.}(1996).
\textit{Weak and Measure-valued Solutions to Evolutionary PDEs.},  Chapman \& Hall.



\bibitem{Muller}
\textsc{M\"{u}ller, S.}(1999).
Variational Models for Microstructure and Phase Transitions, in \textit{Calculus of variations and geometric evolution
problems}, Hildebrandt, S., Struwe, M. (editors), Lecture Notes in Math. Volume 1713, 85--210, Springer Verlag, Berlin Heidelberg, Germany.


\bibitem{Pedregal}
\textsc{Pedregal, P.}(1997).
\textit{Parametrized Measures and Variational Principles}, Birkh\"auser, Basel, Boston, Berlin.


\bibitem{Pedregal1}
\textsc{Pedregal, P.}(2000).
\textit{Variational Methods in Nonlinear Elasticity}, Society for Industrial and Applied Mathematics, Philadelphia.

\bibitem{Puchala}
\textsc{Pucha{\l}a, P.}(2014).
An elementary method of calculating Young measures in some special cases.
\textit{Optimization} \textbf{63} No.9, 1419--1430.

\bibitem{Roubicek}
\textsc{Roub\'{i}\v{c}ek, T.}(1997).
\textit{Relaxation in Optimization Theory and Variational Calculus}, Walter de Gruyter, Berlin, New York.

\bibitem{Soong}
\textsc{Soong, T.T.}(2004).
\textit{Fundamentals of Probability and Statistics for Engineers}, John Wiley \& Sons, Inc., Hoboken, NJ.

\bibitem{Young}
\textsc{Young, L.C.}(1937).
Generalized curves and the existence of an attained absolute minimum in the calculus of variations.
\textit{Comptes Rendus de la Soci\'et\'e des Sciences et des Lettres de Varsovie, classe III} \textbf{30}, 212--234.



\end{thebibliography}
\end{document}